\documentclass[11pt,reqno]{amsart}
\usepackage[no-math]{fontspec}
\usepackage{mathtools,amssymb,amsthm,mathrsfs}
\usepackage{microtype}
\usepackage{enumitem}
\usepackage[backend=biber,style=numeric,doi=true,sorting=nyt,eprint=true,giveninits=true,maxnames=99]{biblatex}
\DeclareFieldFormat[article]{title}{\mkbibemph{#1}}
\renewbibmacro*{in:}{\ifentrytype{article}{}{\printtext{\bibstring{in}\intitlepunct}}}
\usepackage[colorlinks=false,linkbordercolor={1 0 0},citebordercolor={0 1 0},urlbordercolor={0 1 1}]{hyperref}
\usepackage[nameinlink,noabbrev]{cleveref}
\newcommand{\PositiveIntegers}{\mathbb Z_{\ge 1}}
\newcommand{\NonnegativeIntegers}{\mathbb Z_{\ge 0}}
\newcommand{\Integers}{\mathbb Z}
\newcommand{\Rationals}{\mathbb Q}
\newcommand{\RealNumbers}{\mathbb R}
\DeclareMathOperator{\LinearSpan}{span}
\newcommand{\RealSpan}{\LinearSpan_{\RealNumbers}}
\DeclareMathOperator{\diam}{diam}
\DeclareMathOperator{\IsometryGroup}{Isom}
\DeclareMathOperator{\IdentityMap}{id}

\newcommand{\OpenBall}[3]{\mathrm{U}(#1,#2;#3)}
\newcommand{\ClosedBall}[3]{\mathrm{B}(#1,#2;#3)}

\newcommand{\CompatibleMetrics}{\operatorname{Met}}
\newcommand{\AlgebraicallyIndependentMetrics}{\mathrm{A}}
\newcommand{\RigidMetrics}{\mathrm{R}}
\newcommand{\ProperMetrics}{\CompatibleMetrics_{\mathrm{pr}}}
\newcommand{\TotallyBoundedMetrics}{\CompatibleMetrics_{\mathrm{tb}}}
\newcommand{\UniformDistance}{\mathcal{D}}

\newcommand{\GapExponent}{k}
\newcommand{\DigitVector}{\boldsymbol v}
\newcommand{\DigitSum}{\boldsymbol\xi}
\newcommand{\PartialSum}{\boldsymbol s}

\newcommand{\StandardBasisVector}{\boldsymbol e}

\newcommand{\IndicatorFunction}{\mathbf 1}
\newcommand{\CutPseudometric}{\kappa}
\newcommand{\ThresholdPseudometric}{\tau}
\newcommand{\CutVector}{\boldsymbol c}
\newcommand{\AuxiliaryMetric}{\rho}
\newcommand{\CoarsePseudometric}{q}
\newcommand{\PolynomialZeroMetrics}{\mathrm{F}}
\newcommand{\IsometryWitnessMetrics}{\mathrm{E}}

\newenvironment{acknowledgements}{\medskip\noindent\textit{Acknowledgements.}\ }{\par}
\newenvironment{useofai}{\medskip\noindent\textbf{Use of AI.}\ }{\par}
\numberwithin{equation}{section}

\theoremstyle{plain}
\newtheorem{theorem}{Theorem}[section]
\newtheorem{lemma}[theorem]{Lemma}
\newtheorem{proposition}[theorem]{Proposition}

\theoremstyle{definition}
\newtheorem{definition}[theorem]{Definition}

\newtheorem{question}[theorem]{Question}

\theoremstyle{remark}
\newtheorem{remark}[theorem]{Remark}

\crefname{theorem}{theorem}{theorems}
\Crefname{theorem}{Theorem}{Theorems}

\crefname{lemma}{lemma}{lemmas}
\Crefname{lemma}{Lemma}{Lemmas}

\crefname{proposition}{proposition}{propositions}
\Crefname{proposition}{Proposition}{Propositions}

\crefname{corollary}{corollary}{corollaries}
\Crefname{corollary}{Corollary}{Corollaries}

\crefname{claim}{claim}{claims}
\Crefname{claim}{Claim}{Claims}

\crefname{definition}{definition}{definitions}
\Crefname{definition}{Definition}{Definitions}

\crefname{example}{example}{examples}
\Crefname{example}{Example}{Examples}

\crefname{question}{question}{questions}
\Crefname{question}{Question}{Questions}

\crefname{remark}{remark}{remarks}
\Crefname{remark}{Remark}{Remarks}
\title{Algebraically independent distances and rigid metrics}
\author{Yoshito Ishiki}
\address{Department of Mathematical Sciences\\
Tokyo Metropolitan University\\
Minami-osawa, Hachioji, Tokyo 192-0397, Japan}
\email{ishiki-yoshito@tmu.ac.jp}
\date{September 15, 2026}
\keywords{Space of metrics, algebraic independence, strongly rigid metric, isometry group}
\begin{document}
\begin{abstract}
We study metrics whose distances on distinct two-point subsets are algebraically independent over the rationals. We prove that every compatible metric on a strongly zero-dimensional metrizable space of cardinality at most continuum can be uniformly approximated by compatible metrics with this property. If the space is completely metrizable, the approximating metrics can also be chosen complete. For every $\sigma$-compact metrizable space, the metrics with algebraically independent distances form a $G_\delta$ set in the uniform topology. We also study rigid metrics, whose only bijective self-isometry is the identity. On every locally compact Polish space, rigid proper metrics form a $G_\delta$ set among proper compatible metrics. Using a theorem of Niemiec, we obtain uniform density of rigid metrics on compact metrizable spaces with at least three points. Passing to compact completions then yields rigid approximations of every totally bounded compatible metric on any space with at least three points.
\end{abstract}
\maketitle
\section{Introduction}\label{sec:intro}
A metric on a set
$X$
is said to be strongly rigid if any two distinct two-point subsets of
$X$
have different distances.
The author proved that strongly rigid compatible metrics are uniformly dense on strongly zero-dimensional metrizable spaces of cardinality at most continuum \cite[Theorem~1.2]{Ishiki2024}.
The construction also ensures rational linear independence of any two distinct positive distances \cite[Theorem~1.1]{Ishiki2024}.
We consider algebraic independence for arbitrary finite collections of distinct two-point subsets.

For a metrizable space
$X$,
we denote by
$\CompatibleMetrics(X)$
the set of all metrics inducing its topology.
For
$d,e\in\CompatibleMetrics(X)$,
define
\[
\UniformDistance_X(d,e)=\sup_{x,y\in X}|d(x,y)-e(x,y)|.
\]
This function may take the value infinity.
The metric
$\min\{1,\UniformDistance_X\}$
induces the uniform topology considered throughout this paper.
We write
\[
[X]^2=\{\{x,y\}\mid x,y\in X,\ x\ne y\}.
\]

\begin{definition}\label{def:ai}
For a metrizable space
$X$,
we say that a metric
$d\in\CompatibleMetrics(X)$
has algebraically independent distances if,
for every positive integer
$r$,
every choice of distinct
$\{x_1,y_1\},\ldots,\{x_r,y_r\}\in[X]^2$,
and every nonzero polynomial
$P\in\Rationals[T_1,\ldots,T_r]$,
we have
\[
P(d(x_1,y_1),\ldots,d(x_r,y_r))\ne0.
\]
We denote by
$\AlgebraicallyIndependentMetrics(X)$
the set of all such metrics.
\end{definition}
For
$d\in\AlgebraicallyIndependentMetrics(X)$,
the values
$d(x,y)$,
indexed by
$\{x,y\}\in[X]^2$,
are transcendental.
Moreover,
for distinct
$\{x,y\},\{u,v\}\in[X]^2$,
we have
\[
d(x,y)-d(u,v)\ne0.
\]
A nonempty metrizable space is strongly zero-dimensional if its covering dimension is zero,
or equivalently,
if its large inductive dimension is zero.
We also regard the empty space as strongly zero-dimensional.
Let
$\mathfrak c$
denote the cardinality of the continuum.

The following theorem strengthens the two-distance conclusion to arbitrary finite algebraic relations.
\begin{theorem}\label{thm:density}
Let
$X$
be a strongly zero-dimensional metrizable space with
$|X|\le\mathfrak c$.
Then, for every
$d\in\CompatibleMetrics(X)$
and every
$\varepsilon>0$,
there exists
$e\in\AlgebraicallyIndependentMetrics(X)$
such that
$\UniformDistance_X(d,e)<\varepsilon$.
If
$X$
is completely metrizable, then
$e$
can be chosen complete.
\end{theorem}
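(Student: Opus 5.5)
Our plan follows the strategy of \cite{Ishiki2024}, with rational linear independence replaced by algebraic independence. We use the fact that $\RealNumbers$ has transcendence degree $\mathfrak c$ over $\Rationals$: there is an algebraically independent family of cardinality $\mathfrak c$, and every nonempty open interval contains $\mathfrak c$ elements of any fixed such family (apply affine maps with rational coefficients). Since $|X|\le\mathfrak c$, we have $|[X]^2|\le\mathfrak c$.

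\textbf{Step 1 (ultrametric-type perturbation).} Fix $d$ and $\varepsilon$. We may first assume $d$ is bounded, replacing $d$ by $\min\{d,1\}$ only if the final metric is adjusted back; more cleanly, we work directly with $d$. Since $X$ is strongly zero-dimensional, we choose a sequence of clopen partitions $\mathcal U_n$ of $X$, each refining the previous one, with the mesh of $\mathcal U_n$ less than $\varepsilon 2^{-n}$ with respect to $d$. These exist because covering dimension zero lets us refine any open cover, here the cover by balls of radius $\varepsilon2^{-n-1}$, to a disjoint clopen cover.

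For $x\ne y$, let $n(x,y)$ be the least $n$ such that $x$ and $y$ lie in different members of $\mathcal U_n$. We then aim for a metric of the form
\[
e(x,y)=d'(x,y)+\theta_{\{x,y\}},
\]
where $d'$ is a compatible metric uniformly close to $d$ and constant on pairs of distinct cells of $\mathcal U_{n}$ at the appropriate level. This mirrors the construction of \cite{Ishiki2024}, where distances are first made ``locally constant'' on products of clopen cells. The perturbations $\theta_{\{x,y\}}\in(0,\varepsilon2^{-n(x,y)})$ must be chosen so that the whole family $\{e(x,y)\}$ is algebraically independent.

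\textbf{Step 2 (transfinite choice).} Well-order $[X]^2$ in order type at most $\mathfrak c$. At stage $\alpha$, the field $K_\alpha$ generated over $\Rationals$ by the values already chosen has cardinality $<\mathfrak c$. Its algebraic closure in $\RealNumbers$ therefore also has cardinality $<\mathfrak c$. Hence the allowed interval for $\theta$ contains a value $t$ such that $d'(x_\alpha,y_\alpha)+t$ is transcendental over $K_\alpha(d'\text{-values})$.

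The \emph{main obstacle} is that $d'$ already involves real numbers, and that the triangle inequality constrains the choices. The device for the first issue is to make $d'$ take values in a fixed countable field. Rational-valued $d'$ is possible: the cells of each $\mathcal U_n$ are clopen, so one defines $d'$ from the ultrametric-like hierarchy with rational level constants plus a rational approximation of $d$ between cells. The perturbation is then transcendental over a field of size $<\mathfrak c$.

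For the triangle inequality, we keep each perturbation in $[\delta_n/2,\delta_n)$, where $\delta_n$ is attached to level $n=n(x,y)$. The inequality $e(x,z)\le e(x,y)+e(y,z)$ then holds with room to spare, provided $d'$ satisfies a strict triangle inequality with a margin exceeding $\delta_n$. We arrange this margin by adding a term of size $\ge\delta_n$ for pairs separated at level $n$, as in the standard ``$d+\sum$ cut pseudometric'' trick. Compatibility follows because $e-d'$ tends to $0$ uniformly as $n(x,y)\to\infty$, which happens exactly on sequences converging in $X$. The uniform error is less than $\varepsilon$ by choosing all constants small.

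\textbf{Step 3 (completeness).} If $X$ is completely metrizable, we start from a complete compatible $d_0$ and replace $d$ by $d+\min\{d_0,\eta\}$-type corrections. Alternatively, we use the fact that $e$ is uniformly equivalent to $d'$, whose Cauchy sequences coincide with those of $d$ once $d$ is chosen complete. The earlier reduction that makes $d$ itself complete while changing it by less than $\varepsilon/2$ uses a truncated complete metric, as in \cite{Ishiki2024}. Uniform closeness by small amounts preserves Cauchy sequences, so $e$ is complete.
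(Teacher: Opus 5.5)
There is a genuine gap in Step~2, and it is structural, not a matter of tuning constants. Since $e$ and $d'$ are both compatible metrics, for $x_k\to x$ and fixed $z\ne x$ the triangle inequality gives $|e(x_k,z)-e(x,z)|\le e(x_k,x)\to0$, and the same holds for $d'$. Hence $\theta_{\{x_k,z\}}\to\theta_{\{x,z\}}$: the perturbation must be continuous off the diagonal.

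Values chosen independently in $[\delta_n/2,\delta_n)$ break this. Once $x_k$ shares the cell of $x$ beyond level $n=n(x,z)$, we have $n(x_k,z)=n$. If $d'(x_k,z)=d'(x,z)$, the inequality $e(x,z)\le e(x,x_k)+e(x_k,z)$ becomes $\theta_{\{x,z\}}-\theta_{\{x_k,z\}}\le e(x,x_k)$. The left side may stay near $\delta_n/2$ while the right side tends to $0$. No margin in $d'$ can absorb this, because the slack $d'(x_k,x)+d'(x,z)-d'(x_k,z)$ is at most $2d'(x_k,x)\to0$. So a margin exceeding $\delta_{n(x,z)}$ in every triangle is impossible on a non-discrete space.

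Once continuity is imposed, the transcendence-degree count has nothing to act on. For separable $X$ such as the Cantor set, $e$ is determined by its values on $D\times D$ for a countable dense $D$. The countably many pairs from $D$ are all handled before some stage $\beta<\mathfrak c$, since $\mathfrak c$ has uncountable cofinality. From then on every $e(x_\alpha,y_\alpha)$ is a forced limit, so there is no freedom to pick it transcendental over $K_\alpha$.

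What is missing is a mechanism that certifies algebraic independence of distances arising as limits, uniformly over all finite families of pairs. The paper gets this from one explicit series $\rho=\sum_j 2^{-(j+2)!}\delta_j$ of $\{0,1\}$-valued locally constant pseudometrics of two kinds:
\begin{itemize}
\item cut pseudometrics of a countable Boolean algebra of clopen sets that realizes every subset of every finite set, so that on any $r$ distinct pairs the recurrent digit vectors span $\mathbb R^r$;
\item threshold pseudometrics of a compatible ultrametric, which give compatibility and completeness.
\end{itemize}
A Liouville-type gap lemma then yields algebraic independence for every finite family at once.

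Your rational-valued, locally constant approximation of $d$ does match the paper's second step, $e=q+\eta\rho$ with $q$ valued in $\eta\mathbb Z_{\ge0}$, where a rational affine substitution preserves independence. But the perturbation must be such a coherent metric, not a pair-by-pair choice.

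A smaller point: in Step~3, uniform closeness does not preserve Cauchy sequences in general. What works is a lower bound $e\ge d'$ with $d'$ complete, as in the paper's $e\ge\eta\rho$ with $\rho$ complete.
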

The prescribed metric
$d$
need not be complete or bounded.
When
$|X|=\mathfrak c$,
the resulting family of algebraically independent distances also has cardinality
$\mathfrak c$.

We next prove that
$\AlgebraicallyIndependentMetrics(X)$
is a
$G_\delta$
subset of
$\CompatibleMetrics(X)$
for every
$\sigma$-compact metrizable space
$X$.
\begin{theorem}\label{thm:ai-gdelta}
Let
$X$
be a
$\sigma$-compact metrizable space.
Then
$\AlgebraicallyIndependentMetrics(X)$
is a
$G_\delta$
subset of
$\CompatibleMetrics(X)$.
If
$X$
is strongly zero-dimensional, then
$\AlgebraicallyIndependentMetrics(X)$
is dense in
$\CompatibleMetrics(X)$.
\end{theorem}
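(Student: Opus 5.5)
The plan is to write the complement of $\AlgebraicallyIndependentMetrics(X)$ in $\CompatibleMetrics(X)$ as a countable union of closed sets, and then to read off density from \cref{thm:density}. Write $X=\bigcup_{n}K_n$ with $K_1\subseteq K_2\subseteq\cdots$ compact. A metric $d$ fails to be in $\AlgebraicallyIndependentMetrics(X)$ exactly when there exist $r$, a nonzero $P\in\Rationals[T_1,\ldots,T_r]$ (a countable choice) and distinct pairs $\{x_i,y_i\}$ with $P(d(x_1,y_1),\ldots,d(x_r,y_r))=0$. The trouble is that distinct pairs form a non-compact set. So we also quantify the separation. For $n,m\in\PositiveIntegers$, $r$ and $P$, define $F(P,n,m)$ as the set of $d\in\CompatibleMetrics(X)$ for which there are $x_i,y_i\in K_n$ with the following properties: $d(x_i,y_i)\ge 1/m$; the pairs are ``$1/m$-separated'' in the sense that for $i\ne j$ we have $\min\{d(x_i,x_j)+d(y_i,y_j),\,d(x_i,y_j)+d(y_i,x_j)\}\ge 1/m$; and $P(d(x_1,y_1),\ldots,d(x_r,y_r))=0$. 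Every witness of failure lies in some $F(P,n,m)$, because finitely many distinct pairs inside some $K_n$ have positive separation. Conversely, every member of $F(P,n,m)$ is a witness of failure. Hence the complement equals $\bigcup_{P,n,m}F(P,n,m)$, which is a countable union.

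The main step is showing that each $F(P,n,m)$ is closed in the uniform topology. Let $d_k\to d$ uniformly with $d_k\in F(P,n,m)$, and take witnesses $(x^k_i,y^k_i)\in K_n^{2r}$. Since $K_n$ is compact in the topology of $X$, we may pass to a subsequence converging in $X$ to $(x_i,y_i)\in K_n^{2r}$. The point needing care is passing the inequalities to the limit, because the witnesses move with $k$. Here we use
\[
|d_k(a_k,b_k)-d(a,b)|\le \UniformDistance_X(d_k,d)+|d(a_k,b_k)-d(a,b)|,
\]
and the second term tends to $0$ because $d$ is continuous on $X\times X$ (it is compatible) and $a_k\to a$, $b_k\to b$ in $X$. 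Each closed condition then passes to the limit: we get $d(x_i,y_i)\ge 1/m>0$, so $x_i\ne y_i$. The separation inequalities also survive, so the limit pairs are distinct. Finally, $P$ is continuous, so $P(d(x_1,y_1),\ldots)=0$. Therefore $d\in F(P,n,m)$, the set is closed, and $\AlgebraicallyIndependentMetrics(X)$ is a $G_\delta$. This is the step I expect to carry the real content: it is the only place where compactness of the $K_n$ enters, and it needs uniform convergence of the metrics combined with convergence of the points.

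For the density statement, a $\sigma$-compact metrizable space is separable, so $|X|\le\mathfrak c$. If $X$ is also strongly zero-dimensional, then \cref{thm:density} gives, for each $d$ and each $\varepsilon>0$, some $e\in\AlgebraicallyIndependentMetrics(X)$ with $\UniformDistance_X(d,e)<\varepsilon$. This is exactly density in the topology induced by $\min\{1,\UniformDistance_X\}$.
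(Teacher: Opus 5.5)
Your proof is correct and is essentially the paper's argument: the complement is written as the countable union of closed sets indexed by $P$, $n$, $m$, closedness comes from compactness of $K_n$ together with the estimate $|d_k(a_k,b_k)-d(a,b)|\le\UniformDistance_X(d_k,d)+|d(a_k,b_k)-d(a,b)|$, and density follows from Theorem~\ref{thm:density}. The only difference is that the paper measures the nondegeneracy and separation conditions with a fixed reference compatible metric $\rho$, so that the configuration set $\mathcal C_{r,n,m}\subseteq K_n^{2r}$ is compact and independent of $d$. You measure these conditions with the varying metric $d$ and pass the inequalities to the limit using the same estimate, which works equally well.
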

Indeed, every
$\sigma$-compact metrizable space has cardinality at most
$\mathfrak c$.

The author asked whether algebraically independent metrics are comeager on every strongly zero-dimensional metrizable space \cite[Question~5.3]{Ishiki2025Baire}.
Here we impose independence only on positive distances,
since every metric takes the value zero on the diagonal.
For strongly zero-dimensional
$\sigma$-compact
$X$,
Theorem~\ref{thm:ai-gdelta} shows that
$\AlgebraicallyIndependentMetrics(X)$
is a dense
$G_\delta$
subset of
$\CompatibleMetrics(X)$,
and hence is comeager.
This answers the positive-distance formulation of the question for
$\sigma$-compact spaces,
with the additional requirement that distinct two-point subsets have distinct distances.

The proof of Theorem~\ref{thm:density} has two parts.
We first construct a bounded compatible metric by summing clopen cut pseudometrics and pseudometrics defined by thresholds of a compatible ultrametric.
The cut pseudometrics supply enough linearly independent vectors on every finite collection of distinct two-point subsets to apply Lemma~\ref{lem:digits}.
The threshold pseudometrics ensure that the sum induces the topology of
$X$
and is complete when the chosen ultrametric is complete.
We then approximate the prescribed metric by a rational-valued pseudometric and add a small rational multiple of the independent metric.
This rational translation and scaling preserve algebraic independence.

Janos and Martin characterized nonempty zero-dimensional separable metrizable spaces by the existence of a totally bounded compatible star rigid metric \cite[Theorem~2]{JanosMartin1978}.
Here star rigidity means that, from each fixed point, distinct points have distinct distances.
In a different parameter space, Rouyer proved that a generic compact metric space is strongly rigid, using the term totally anisometric \cite[Theorem~2]{Rouyer2011}.
This theorem concerns isometry classes with the Gromov--Hausdorff topology.
Our approximation problem keeps the underlying topological space fixed.

A compatible metric is said to be rigid if its group of bijective self-isometries is trivial.
We write
\[
\RigidMetrics(X)=\{d\in\CompatibleMetrics(X)\mid\IsometryGroup(X,d)=\{\IdentityMap_X\}\}.
\]
For
$|X|\ge3$,
strong rigidity implies membership in
$\RigidMetrics(X)$.
Theorem~\ref{thm:density} therefore implies density of
$\RigidMetrics(X)$
when
$X$
is strongly zero-dimensional and
$3\le|X|\le\mathfrak c$.
Without a dimension assumption,
Niemiec's theorem on proper metrics \cite[Theorem~1.7]{Niemiec2014} implies density on compact metrizable spaces with at least three points.
We prove that
$\RigidMetrics(X)$
is
$G_\delta$
on compact spaces and, more generally, is relatively
$G_\delta$
within proper compatible metrics on locally compact Polish spaces.

\medskip\noindent\textbf{Organization.}
In Section~\ref{sec:digits},
we prove a criterion for algebraic independence that will be used in the metric construction.
Section~\ref{sec:construction} combines clopen cuts and partitions to construct the approximating metrics.
Section~\ref{sec:borel} treats polynomial relations on compact configurations.
Section~\ref{sec:rigidity} uses compactness of isometries and compact completions to study rigidity.
Section~\ref{sec:questions} formulates the remaining questions.

\medskip\noindent\textbf{Conventions and notation.}
All algebraic independence statements are over
$\Rationals$.
Sequences are indexed by
$\NonnegativeIntegers=\{0,1,2,\ldots\}$.
Vectors in finite-dimensional real spaces are written in bold.
For a subset
$U\subseteq X$,
the function
$\IndicatorFunction_U\colon X\to\{0,1\}$
is its indicator function.
For a metric space
$(X,d)$,
a point
$x\in X$,
and
$r>0$,
we denote the open and closed balls by
\[
\OpenBall{x}{r}{d}=\{y\in X\mid d(x,y)<r\},
\qquad
\ClosedBall{x}{r}{d}=\{y\in X\mid d(x,y)\le r\},
\]
respectively.
A proper metric is a metric for which every closed bounded ball is compact.
A Polish space is a separable completely metrizable space.
All isometry groups in this paper consist of bijective self-isometries.
For the empty space, the supremum defining
$\UniformDistance_X$
is taken to be zero.

\begin{useofai}
The author used OpenAI Codex for literature searches,
exploration of constructions and proofs,
English drafting,
and \LaTeX{} typesetting.
The author reviewed and revised the AI-assisted material and takes full responsibility for the content of this paper.
\end{useofai}

\begin{acknowledgements}
The author was supported by JSPS KAKENHI Grant Number JP24KJ0182.
\end{acknowledgements}
\section{A criterion for algebraic independence}\label{sec:digits}
To construct the distances in Theorem~\ref{thm:density},
we will sum pseudometrics with rapidly decreasing coefficients.
On a fixed finite collection of distinct two-point subsets,
their values form vectors with coordinates zero or one.
The next lemma reduces algebraic independence of the resulting sums to a spanning condition on the vectors that occur infinitely often.
The lemma also follows from Shiokawa's theorem \cite[Corollary~1]{Shiokawa1982},
as explained in Remark~\ref{rem:shiokawa},
but we include a proof to keep the paper self-contained.

\begin{lemma}\label{lem:digits}
Let
$b\ge 2$
and
$r\ge 1$
be integers.
Let
$\{\GapExponent_j\}_{j\in\NonnegativeIntegers}$
be a strictly increasing sequence of positive integers satisfying
\begin{equation}\label{eq:gap}
\lim_{j\to\infty}\frac{\GapExponent_{j+1}}{\GapExponent_j}=\infty.
\end{equation}
Let
$\{\DigitVector_j\}_{j\in\NonnegativeIntegers}$
be a sequence in
$\{0,1\}^r$,
and define
\[
\mathcal V=\{\DigitVector\in\{0,1\}^r\mid \DigitVector_j=\DigitVector\text{ for infinitely many }j\}.
\]
If
$\RealSpan\mathcal V=\RealNumbers^r$,
then the coordinates of the vector
\[
\DigitSum=\sum_{j=0}^{\infty}b^{-\GapExponent_j}\DigitVector_j
\]
are algebraically independent over
$\Rationals$.
\end{lemma}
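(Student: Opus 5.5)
Suppose, for contradiction, that $P\in\Rationals[T_1,\ldots,T_r]$ is a nonzero polynomial with $P(\DigitSum)=0$. After clearing denominators we may take integer coefficients. Let $D$ be its total degree. The plan is to truncate the series: set $\PartialSum_n=\sum_{j\le n}b^{-\GapExponent_j}\DigitVector_j$. Then $b^{D\GapExponent_n}P(\PartialSum_n)$ is an integer. We will compare $P(\DigitSum)$ with a Taylor expansion around $\PartialSum_n$. The tail $\DigitSum-\PartialSum_n$ equals $b^{-\GapExponent_{n+1}}\DigitVector_{n+1}+O(b^{-\GapExponent_{n+2}})$, and \eqref{eq:gap} makes the error term negligible against every fixed power of $b^{-\GapExponent_{n+1}}$ times anything of size $b^{O(\GapExponent_{n+1})}$.

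The key step is an iterated Taylor argument that produces a nonzero integer which is too small. Expanding gives
\[
0=P(\DigitSum)=\sum_{\alpha}\frac{\partial^\alpha P(\PartialSum_n)}{\alpha!}(\DigitSum-\PartialSum_n)^\alpha .
\]
Each $b^{D\GapExponent_n}\partial^\alpha P(\PartialSum_n)/\alpha!$ is an integer, up to a fixed bounded denominator. Its absolute value is at most $C$, independently of $n$. Since $\GapExponent_{n+1}/\GapExponent_n\to\infty$, the factor $b^{D\GapExponent_n}$ is tiny relative to $b^{\GapExponent_{n+1}}$.

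First suppose $P(\PartialSum_n)\neq0$ for infinitely many $n$. Then $|P(\PartialSum_n)|\ge b^{-D\GapExponent_n}$. On the other hand, $|P(\PartialSum_n)|=|P(\PartialSum_n)-P(\DigitSum)|\le C' b^{-\GapExponent_{n+1}}$. For large $n$ these two bounds contradict each other. Hence $P(\PartialSum_n)=0$ for all large $n$.

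Next apply the same reasoning to the directional derivatives. Fix $\DigitVector\in\mathcal V$ and take the infinitely many $n$ with $\DigitVector_{n+1}=\DigitVector$. We have $P(\PartialSum_{n+1})=0=P(\PartialSum_n)$ and $\PartialSum_{n+1}=\PartialSum_n+b^{-\GapExponent_{n+1}}\DigitVector$. Taylor expansion in the single variable $t=b^{-\GapExponent_{n+1}}$ gives
\[
0=\sum_{m\ge1}\frac{t^m}{m!}(\DigitVector\cdot\nabla)^mP(\PartialSum_n).
\]
Consider the lowest $m$ with $(\DigitVector\cdot\nabla)^mP(\PartialSum_n)\ne0$. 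That term has size at least $t^m b^{-D\GapExponent_n}$, while the remaining terms are $O(t^{m+1})$. This is a contradiction unless every term vanishes. So $P(\PartialSum_n+t\DigitVector)\equiv0$ as a polynomial in $t$, and in particular $(\DigitVector\cdot\nabla)P(\PartialSum_n)=0$.

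The obstacle is that this only holds at the points $\PartialSum_n$ for those special $n$, so it has to be propagated. I would set up an induction on the derivatives: let $\mathcal Q$ be the set of all polynomials $(\DigitVector^{(1)}\cdot\nabla)\cdots(\DigitVector^{(k)}\cdot\nabla)P$ with each $\DigitVector^{(i)}\in\mathcal V$. The claim to prove by the same size argument is that every $Q\in\mathcal Q$ satisfies $Q(\PartialSum_n)=0$ for all sufficiently large $n$. The proof of the claim proceeds in three steps.

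\begin{enumerate}
\item For each $Q\in\mathcal Q$, we already know $Q(\PartialSum_n)=0$ for all large $n$; this is the first step with $Q$ in place of $P$. The argument used only that $Q(\DigitSum)=0$, and $Q(\DigitSum)=\lim Q(\PartialSum_n)$.
\item Because only finitely many vectors occur, large $n$ satisfy $\DigitVector_{n+1}\in\mathcal V$. Then the one-variable argument applies to $Q$ and shows $(\DigitVector_{n+1}\cdot\nabla)Q(\PartialSum_n)=0$.
\item Taking limits yields $(\DigitVector\cdot\nabla)Q(\DigitSum)=0$, which is what is needed to restart step 1 for the derivative.
\end{enumerate}

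Hence all iterated derivatives of $P$ along $\mathcal V$ vanish at $\DigitSum$. Since $\RealSpan\mathcal V=\RealNumbers^r$, these operators generate all partial derivatives. A polynomial all of whose derivatives of every order vanish at a single point is zero, which contradicts $P\ne0$.

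The delicate bookkeeping is that the denominator bound $b^{D\GapExponent_n}$ stays fixed across the finitely many derivatives, since their degrees are at most $D$, and that the Taylor error terms $O(b^{-\GapExponent_{n+2}})$ are dominated. I would verify both carefully using \eqref{eq:gap}.
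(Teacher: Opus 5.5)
Your proposal is correct and uses the same mechanism as the paper. The denominator-versus-tail comparison forces $P(\PartialSum_n)=0$ for all large $n$. Differencing consecutive partial sums along recurrent vectors then gives $(\DigitVector\cdot\nabla)P(\DigitSum)=0$ for each $\DigitVector\in\mathcal V$, and the spanning hypothesis kills the gradient. The difference is only in packaging. The paper takes $P$ of least degree, so a single first-order step suffices: it uses the integral form of the mean value theorem, with no second denominator argument. That step already produces a nonzero lower-degree polynomial $\partial P/\partial T_i$ vanishing at $\DigitSum$, which contradicts minimality. Your induction over iterated derivatives along $\mathcal V$ reaches the same contradiction with more bookkeeping. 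One small slip: $b^{D\GapExponent_n}\partial^\alpha P(\PartialSum_n)/\alpha!$ is not bounded independently of $n$. Only $\partial^\alpha P(\PartialSum_n)$ is, and that is all your argument actually uses.
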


\begin{proof}
Since the exponents are strictly increasing positive integers,
the series defining
$\DigitSum$
converges absolutely.
For the sake of contradiction,
suppose that a nonzero polynomial with rational coefficients vanishes at
$\DigitSum$.
Choose a nonzero polynomial
$P\in\Integers[T_1,\ldots,T_r]$
of the least total degree among all such polynomials.
Write its degree as
$d$.
A nonzero constant cannot vanish,
and hence
$d\ge 1$.

For every
$N\in\NonnegativeIntegers$,
define
\[
\PartialSum_N=\sum_{j=0}^{N}b^{-\GapExponent_j}\DigitVector_j.
\]
Each coordinate of
$\PartialSum_N$
belongs to
$b^{-\GapExponent_N}\Integers$.
Consequently,
for every
$N\in\NonnegativeIntegers$,
we have
\begin{equation}\label{eq:denominator}
P(\PartialSum_N)\in b^{-d\GapExponent_N}\Integers.
\end{equation}
All the vectors
$\PartialSum_N$
and
$\DigitSum$
belong to a fixed compact box in
$\RealNumbers^r$.
The gradient of
$P$
is bounded on that box.
Thus there exists a constant
$C>0$
such that,
for every
$N\in\NonnegativeIntegers$,
\begin{equation}\label{eq:tail}
\begin{split}
|P(\PartialSum_N)|
&=|P(\PartialSum_N)-P(\DigitSum)|
\le C\|\PartialSum_N-\DigitSum\|_{\infty}\\
&\le C\sum_{j>N}b^{-\GapExponent_j}
\le \frac{Cb}{b-1}b^{-\GapExponent_{N+1}}.
\end{split}
\end{equation}
The last inequality uses strict increase of the integer exponents.
By
\eqref{eq:gap},
we have
$\GapExponent_{N+1}-d\GapExponent_N\to\infty$.
Combining
\eqref{eq:denominator}
and
\eqref{eq:tail},
we obtain
$N_*\in\NonnegativeIntegers$
such that
\begin{equation}\label{eq:eventual-zero}
P(\PartialSum_N)=0\qquad(N\ge N_*).
\end{equation}
Indeed,
the absolute value of
$P(\PartialSum_N)$
is eventually smaller than the least possible positive value
$b^{-d\GapExponent_N}$.

Fix
$\DigitVector\in\mathcal V$.
Choose an increasing sequence
$\{N_\ell\}_{\ell\in\NonnegativeIntegers}$
such that
$N_\ell>N_*$
and
$\DigitVector_{N_\ell}=\DigitVector$
for every
$\ell\in\NonnegativeIntegers$.
For every
$\ell\in\NonnegativeIntegers$,
put
$\varepsilon_\ell=b^{-\GapExponent_{N_\ell}}$.
Applying the chain rule and the fundamental theorem of calculus to the function
$t\mapsto P(\PartialSum_{N_\ell-1}+t\varepsilon_\ell\DigitVector)$
on
$[0,1]$,
and using
$\PartialSum_{N_\ell}=\PartialSum_{N_\ell-1}+\varepsilon_\ell \DigitVector$
and
\eqref{eq:eventual-zero},
we obtain
\begin{equation}\label{eq:derivative}
0=\frac{P(\PartialSum_{N_\ell})-P(\PartialSum_{N_\ell-1})}{\varepsilon_\ell}
=\int_0^1\nabla P(\PartialSum_{N_\ell-1}+t\varepsilon_\ell \DigitVector)\cdot \DigitVector\,dt.
\end{equation}
The vectors in the integrand converge to
$\DigitSum$
uniformly for
$t\in[0,1]$,
since
\[
\sup_{t\in[0,1]}\|\PartialSum_{N_\ell-1}+t\varepsilon_\ell \DigitVector-\DigitSum\|_\infty
\le \|\PartialSum_{N_\ell-1}-\DigitSum\|_\infty+\varepsilon_\ell\longrightarrow 0.
\]
Continuity of the polynomial gradient and
\eqref{eq:derivative}
therefore imply
$\nabla P(\DigitSum)\cdot \DigitVector=0$.
This holds for every
$\DigitVector\in\mathcal V$.
Since
$\mathcal V$
spans
$\RealNumbers^r$,
we have
\[
\nabla P(\DigitSum)=0.
\]
Since
$P$
is nonconstant,
there exists
$i\in\{1,\ldots,r\}$
such that the polynomial
$Q=\partial P/\partial T_i$
satisfies
\[
Q\in\Integers[T_1,\ldots,T_r]\setminus\{0\},
\qquad \deg Q<d,
\qquad Q(\DigitSum)=0.
\]
This contradicts the minimality of
$d$.
\end{proof}

\begin{remark}\label{rem:shiokawa}
To deduce Lemma~\ref{lem:digits} from \cite[Corollary~1, p.~440]{Shiokawa1982},
choose a basis from the coefficient vectors that occur infinitely often.
An invertible rational change of coordinates sends these basis vectors to integer multiples of the standard basis vectors while keeping all coefficients integral.
Thus each transformed coordinate series has infinitely many exponents that occur in no other coordinate series.
Together with the gap condition \eqref{eq:gap},
this allows Shiokawa's criterion to be applied at
$z=1/b$,
yielding the conclusion of the lemma.
See also \cite[Example~1, p.~441]{Shiokawa1982} for an application to series with factorial exponents.
\end{remark}
\section{Construction of metrics with algebraically independent distances}\label{sec:construction}
We apply Lemma~\ref{lem:digits} to construct a bounded metric in
$\AlgebraicallyIndependentMetrics(X)$.
A family
$\mathcal B$
of subsets of a set
$X$
is said to be a Boolean algebra if it contains
$\emptyset$
and
$X$
and is closed under finite unions,
finite intersections,
and complements relative to
$X$.
We first construct a countable Boolean algebra of clopen sets that realizes every subset of every finite set of points.

A metric
$\sigma$
on
$X$
is said to be an ultrametric if,
for all
$x,y,z\in X$,
we have
\[
\sigma(x,z)\le\max\{\sigma(x,y),\sigma(y,z)\}.
\]
We use its balls to construct the required partitions.

\begin{lemma}\label{lem:cuts}
Let
$X$
be a strongly zero-dimensional metrizable space with
$2\le|X|\le\mathfrak c$.
Then there exists a countable Boolean algebra
$\mathcal B$
of clopen subsets of
$X$
such that,
for every finite set
$V\subseteq X$
and every
$H\subseteq V$,
there exists
$B\in\mathcal B$
with
$B\cap V=H$.
\end{lemma}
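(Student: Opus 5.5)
We will reduce the statement to separating points by countably many clopen sets, and then take the Boolean algebra they generate. The key claim is that there is a countable family $\{C_n\}_{n\in\NonnegativeIntegers}$ of clopen subsets of $X$ that separates points: for distinct $x,y\in X$ some $C_n$ contains exactly one of them. Granting this, let $\mathcal B$ be the Boolean algebra generated by $\{C_n\}$. It is countable, since every element is a finite union of finite intersections of the $C_n$ and their complements. It consists of clopen sets, because clopen sets form a Boolean algebra.

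Given a finite $V\subseteq X$ and $H\subseteq V$, we argue as follows. For each pair $(x,y)$ with $x\in H$ and $y\in V\setminus H$, pick $n$ with $C_n$ separating $x$ and $y$, and let $D_{x,y}$ be $C_n$ or $X\setminus C_n$ so that $x\in D_{x,y}$ and $y\notin D_{x,y}$. Then put
\[
B=\bigcup_{x\in H}\bigcap_{y\in V\setminus H}D_{x,y}.
\]
Here the empty intersection is $X$ and the empty union is $\emptyset$. This set lies in $\mathcal B$, contains each $x\in H$, and excludes each $y\in V\setminus H$, so $B\cap V=H$.

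The main obstacle is producing the countable separating family of clopen sets, and this is where $|X|\le\mathfrak c$ and strong zero-dimensionality enter. We plan to use the standard fact that a strongly zero-dimensional metrizable space admits a compatible ultrametric $\sigma$, which is why ultrametrics were introduced just before the statement. This alone gives clopen balls, but possibly uncountably many. So instead we aim for a continuous injection $f\colon X\to\{0,1\}^{\NonnegativeIntegers}$, and then take $C_n=f^{-1}(\{z\mid z_n=1\})$.

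To build $f$, we first try to construct a continuous injection $X\to\RealNumbers$ or into $2^{\NonnegativeIntegers}$ directly from $\sigma$. The ultrametric balls of radius $2^{-m}$ form a clopen partition $\mathcal P_m$ of $X$, and these partitions refine as $m$ grows. The number of cells in each $\mathcal P_m$ is at most $|X|\le\mathfrak c$. We therefore choose injective labelings $\lambda_m\colon\mathcal P_m\to 2^{\NonnegativeIntegers}$. Each induces a continuous map $g_m\colon X\to 2^{\NonnegativeIntegers}$, continuous because it is locally constant on a clopen partition. The combined map $g=(g_m)_m\colon X\to(2^{\NonnegativeIntegers})^{\NonnegativeIntegers}\cong 2^{\NonnegativeIntegers}$ is continuous. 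It is injective because distinct points lie in distinct cells of some $\mathcal P_m$. The coordinate preimages under $g$ of the sets $\{z\mid z_n=1\}$ then give the required countable separating clopen family.

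The cardinality bound on the cells is exactly where $|X|\le\mathfrak c$ is used. We expect this labeling step to be the only nonroutine point, and it is straightforward once the partitions $\mathcal P_m$ are in hand.
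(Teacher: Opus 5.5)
Your proposal is correct and takes essentially the same route as the paper: a compatible ultrametric from de Groot's theorem, the clopen partitions into balls of radius $2^{-m}$, injective labelings of each partition into $\{0,1\}^{\NonnegativeIntegers}$ (which is where $|X|\le\mathfrak c$ is used), the resulting countable point-separating clopen family (your coordinate preimages under $g$ are exactly the paper's sets $U_{n,k}$), and the Boolean algebra it generates. The differences are cosmetic: you package the family as a continuous injection into the Cantor cube, and when building $B$ you intersect over $V\setminus H$ rather than over $V\setminus\{v\}$.
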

\begin{proof}
By \cite[Theorem~II, p.~951]{deGroot1956},
choose a compatible ultrametric
$\sigma$
on
$X$.
For every
$n\in\NonnegativeIntegers$,
define
\[
\mathcal P_n=\{\OpenBall{y}{2^{-n}}{\sigma}\mid y\in X\}.
\]
The ultrametric inequality implies that two balls of the same radius are either equal or disjoint.
Thus
$\mathcal P_n$
is a partition into open sets.
The complement of each member is a union of other members,
so every member is clopen.
For
$n\in\NonnegativeIntegers$
and
$x\in X$,
write
$\mathcal P_n(x)=\OpenBall{x}{2^{-n}}{\sigma}$.
Since
$2^{-n}\to0$,
these partitions separate points.

We next construct the countable algebra.
For each
$n\in\NonnegativeIntegers$,
the members of
$\mathcal P_n$
are nonempty and mutually disjoint,
so
$|\mathcal P_n|\le|X|\le\mathfrak c$.
Choose an injection
$\theta_n\colon\mathcal P_n\to\{0,1\}^{\NonnegativeIntegers}$.
For
$n\in\NonnegativeIntegers$
and
$k\in\NonnegativeIntegers$,
define
\[
U_{n,k}=\{x\in X\mid\theta_n(\mathcal P_n(x))_k=1\}.
\]
Both
$U_{n,k}$
and its complement are unions of members of
$\mathcal P_n$,
so
$U_{n,k}$
is clopen.
Since the partitions separate points and each
$\theta_n$
is injective,
the family
$\{U_{n,k}\}_{n\in\NonnegativeIntegers,\,k\in\NonnegativeIntegers}$
separates points.
Let
$\mathcal B$
be the Boolean algebra generated by this family.
It is countable.
Fix a finite set
$V\subseteq X$.
For distinct
$v,w\in V$,
choose a generating set or its complement
$B_{v,w}\in\mathcal B$
such that
$v\in B_{v,w}$
and
$w\notin B_{v,w}$.
For every
$v\in V$,
put
\[
B_v=\bigcap_{w\in V\setminus\{v\}}B_{v,w}.
\]
Then
$B_v\cap V=\{v\}$.
For each
$H\subseteq V$,
the set
$B=\bigcup_{v\in H}B_v$
belongs to
$\mathcal B$
and satisfies
$B\cap V=H$.
\end{proof}
We use the countable algebra to obtain algebraic independence.
Pseudometrics defined by thresholds of a compatible ultrametric will ensure compatibility and completeness.

\begin{proposition}\label{prop:bounded-ai}
Let
$X$
be a strongly zero-dimensional metrizable space with
$|X|\le\mathfrak c$.
Then there exists
$\AuxiliaryMetric\in\AlgebraicallyIndependentMetrics(X)$
with
$\AuxiliaryMetric(x,y)\le1/2$
for all
$x,y\in X$.
If
$X$
is completely metrizable, then
$\AuxiliaryMetric$
can be chosen complete.
\end{proposition}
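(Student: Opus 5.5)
We construct $\AuxiliaryMetric$ as a weighted sum of pseudometrics. The trivial cases $|X|\le1$ are handled separately: take $\AuxiliaryMetric=0$, for which $[X]^2$ is empty. So assume $|X|\ge2$.

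\textbf{Building blocks.} Let $\mathcal B$ be the countable Boolean algebra from Lemma~\ref{lem:cuts}. Fix a compatible ultrametric $\sigma$; when $X$ is completely metrizable, take $\sigma$ complete, which is possible by de~Groot's theorem for completely metrizable strongly zero-dimensional spaces. Let $\mathcal P_n$ be the partitions into $\sigma$-balls of radius $2^{-n}$ as in that proof. We use two kinds of pseudometrics with values in $\{0,1\}$.
\begin{itemize}
\item For $B\in\mathcal B$, the cut pseudometric $\CutPseudometric_B(x,y)=|\IndicatorFunction_B(x)-\IndicatorFunction_B(y)|$.
\item For each $n$, the threshold pseudometric $\ThresholdPseudometric_n(x,y)$, equal to $0$ if $\mathcal P_n(x)=\mathcal P_n(y)$ and $1$ otherwise.
\end{itemize}
Both are pseudometrics and continuous, since their level sets are clopen. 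Enumerate a sequence $\{p_j\}_{j\in\NonnegativeIntegers}$ of these pseudometrics in which every $\CutPseudometric_B$ and every $\ThresholdPseudometric_n$ appears infinitely often. Set $\GapExponent_j=(j+2)!$ and $b=2$, and define
\[
\AuxiliaryMetric=\sum_j 2^{-\GapExponent_j}p_j .
\]
Then $\AuxiliaryMetric\le\sum_j2^{-(j+2)!}\le1/2$, and $\AuxiliaryMetric$ is a pseudometric.

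\textbf{Algebraic independence.} Fix distinct $\{x_1,y_1\},\ldots,\{x_r,y_r\}$ and let $V$ be the finite set of their endpoints. Put $\DigitVector_j=(p_j(x_i,y_i))_i\in\{0,1\}^r$. By Lemma~\ref{lem:digits}, it suffices to show that the vectors $(\CutPseudometric_B(x_i,y_i))_i$ for $B\in\mathcal B$ span $\RealNumbers^r$; each of these occurs infinitely often.

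Since every $H\subseteq V$ is realized as $B\cap V$, these vectors are exactly the cut vectors $(|\IndicatorFunction_H(x_i)-\IndicatorFunction_H(y_i)|)_i$ for $H\subseteq V$. This is the main obstacle: showing that cut vectors of the complete graph on $V$ span the edge space. We argue via an explicit identity. For an edge $\{u,v\}$, define
\[
\delta_{\{u\}}+\delta_{\{v\}}-\delta_{\{u,v\}}=2\,\StandardBasisVector_{\{u,v\}},
\]
where $\delta_H$ denotes the cut vector of $H$ on all pairs of $V$. To check this, consider a pair $\{a,c\}$:
\begin{itemize}
\item For $\{a,c\}=\{u,v\}$, the left side is $1+1-0=2$.
\item For $a\in\{u,v\}$ and $c\notin\{u,v\}$, it is $1+0-1=0$ or $0+1-1=0$.
\item For a pair disjoint from $\{u,v\}$, it is $0$.
\end{itemize}
Restricting to the chosen $r$ pairs gives every standard basis vector, so the vectors span and Lemma~\ref{lem:digits} applies. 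In particular all distances are positive, so $\AuxiliaryMetric$ is a metric.

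\textbf{Compatibility and completeness.} Each $p_j$ is continuous and the series converges uniformly, so $\AuxiliaryMetric$ is continuous. Conversely, let $c_n>0$ be the coefficient of some occurrence of $\ThresholdPseudometric_n$. If $\AuxiliaryMetric(x,y)<c_n$, then $\ThresholdPseudometric_n(x,y)=0$, that is, $y\in\mathcal P_n(x)$. Hence $\AuxiliaryMetric$-balls lie inside $\sigma$-balls, and $\AuxiliaryMetric$ is compatible.

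For completeness, suppose $\{x_m\}$ is $\AuxiliaryMetric$-Cauchy. Then for each $n$ its terms eventually lie in a single member of $\mathcal P_n$, so the sequence is $\sigma$-Cauchy. It therefore converges in $X$ when $\sigma$ is complete, and hence also converges with respect to $\AuxiliaryMetric$ by compatibility.
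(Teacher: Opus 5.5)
Your proof is correct and follows the paper's construction essentially step for step. It uses the same cut and threshold pseudometrics, the same weights $2^{-(j+2)!}$, the same identity $\CutVector_{\{x_i\}}+\CutVector_{\{y_i\}}-\CutVector_{\{x_i,y_i\}}=2\StandardBasisVector_i$ to feed Lemma~\ref{lem:digits}, and the same ball inclusion for compatibility and completeness. The only deviations are minor: you get positivity of $\AuxiliaryMetric$ from algebraic independence (take $r=1$, $P=T_1$) rather than from the threshold lower bound, which is equally valid; and for the complete ultrametric the paper cites \cite[Proposition~2.18]{Ishiki2021Ultrametrics} rather than de~Groot.
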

\begin{proof}
For
$|X|\le1$,
the assertion holds for the unique metric.
Assume that
$|X|\ge2$.
Choose
$\mathcal B$
by Lemma~\ref{lem:cuts} and a compatible ultrametric
$\sigma$
by \cite[Theorem~II]{deGroot1956}.
If
$X$
is completely metrizable,
choose
$\sigma$
complete by \cite[Proposition~2.18]{Ishiki2021Ultrametrics}.
For
$U\in\mathcal B$
and
$x,y\in X$,
define
\[
\CutPseudometric_U(x,y)=|\IndicatorFunction_U(x)-\IndicatorFunction_U(y)|.
\]
For
$n\in\NonnegativeIntegers$
and
$x,y\in X$,
define
\[
\ThresholdPseudometric_n(x,y)=\begin{cases}0&\sigma(x,y)<2^{-n},\\1&\sigma(x,y)\ge2^{-n}.\end{cases}
\]
The cut function
$\CutPseudometric_U$
is a continuous pseudometric.
For every
$n\in\NonnegativeIntegers$
and every
$x,y,z\in X$,
the ultrametric inequality implies
\[
\ThresholdPseudometric_n(x,z)\le\max\{\ThresholdPseudometric_n(x,y),\ThresholdPseudometric_n(y,z)\}.
\]
Thus
$\ThresholdPseudometric_n$
is a pseudometric.
For every
$n\in\NonnegativeIntegers$,
every
$x,y\in X$,
and every
$(u,v)\in\OpenBall{x}{2^{-n}}{\sigma}\times\OpenBall{y}{2^{-n}}{\sigma}$,
the ultrametric inequality also implies
\[
\ThresholdPseudometric_n(u,v)=\ThresholdPseudometric_n(x,y).
\]
Hence
$\ThresholdPseudometric_n$
is locally constant and therefore continuous.

Choose a sequence
$\{\delta_j\}_{j\in\NonnegativeIntegers}$
from the countable collection
\[
\{\CutPseudometric_U\mid U\in\mathcal B\}
\cup\{\ThresholdPseudometric_n\mid n\in\NonnegativeIntegers\}
\]
in which every member of this collection occurs infinitely often.
For
$j\in\NonnegativeIntegers$,
put
$w_j=2^{-(j+2)!}$.
For
$x,y\in X$,
define
\begin{equation}\label{eq:series-metric}
\AuxiliaryMetric(x,y)=\sum_{j=0}^{\infty}w_j\delta_j(x,y).
\end{equation}
For every
$N\in\NonnegativeIntegers$,
we have
\[
\sup_{x,y\in X}\left|
\AuxiliaryMetric(x,y)-\sum_{j=0}^Nw_j\delta_j(x,y)\right|
\le\sum_{j>N}w_j\longrightarrow0\quad(N\to\infty).
\]
Thus the convergence is uniform.
Also,
\[
\sum_{j=0}^{\infty}w_j\le\sum_{j=0}^{\infty}2^{-(j+2)}=\frac12.
\]
Thus
$\AuxiliaryMetric$
is a continuous pseudometric.
For each
$n\in\NonnegativeIntegers$,
choose
$j(n)$
such that
$\delta_{j(n)}=\ThresholdPseudometric_n$.
For every
$n\in\NonnegativeIntegers$
and every
$x,y\in X$,
we have
\begin{equation}\label{eq:threshold-lower}
\AuxiliaryMetric(x,y)\ge w_{j(n)}\ThresholdPseudometric_n(x,y).
\end{equation}
For distinct
$x,y\in X$,
choose
$n\in\NonnegativeIntegers$
with
$2^{-n}\le\sigma(x,y)$.
Then \eqref{eq:threshold-lower} implies
$\AuxiliaryMetric(x,y)\ge w_{j(n)}>0$,
so
$\AuxiliaryMetric$
is a metric.
For every
$n\in\NonnegativeIntegers$
and every
$x\in X$,
\eqref{eq:threshold-lower} implies
\begin{equation}\label{eq:ultrametric-ball-inclusion}
\OpenBall{x}{w_{j(n)}}{\AuxiliaryMetric}\subseteq\OpenBall{x}{2^{-n}}{\sigma}.
\end{equation}
Since
$\sigma$
is compatible and
$2^{-n}\to0$,
this inclusion and continuity of
$\AuxiliaryMetric$
prove compatibility.

We next prove algebraic independence.
Fix a positive integer
$r$
and distinct two-point subsets
$\{x_1,y_1\},\ldots,\{x_r,y_r\}\in[X]^2$,
and put
$V=\bigcup_{i=1}^r\{x_i,y_i\}$.
For
$H\subseteq V$,
define
$\CutVector_H\in\{0,1\}^r$
by
\[
\CutVector_H=(|\IndicatorFunction_H(x_i)-\IndicatorFunction_H(y_i)|)_{i=1}^r.
\]
Fix
$H\subseteq V$.
By Lemma~\ref{lem:cuts},
choose
$B\in\mathcal B$
with
$B\cap V=H$.
Since all the points
$x_i,y_i$
belong to
$V$,
for every
$i\in\{1,\ldots,r\}$
we have
\[
\CutPseudometric_B(x_i,y_i)
=|\IndicatorFunction_B(x_i)-\IndicatorFunction_B(y_i)|
=|\IndicatorFunction_H(x_i)-\IndicatorFunction_H(y_i)|.
\]
Thus the values of the single pseudometric
$\CutPseudometric_B$
on the selected two-point subsets form the vector
$\CutVector_H$.
By the choice of the sequence
$\{\delta_j\}_{j\in\NonnegativeIntegers}$,
we have
$\delta_j=\CutPseudometric_B$
for infinitely many
$j$.
At each such index,
\[
(\delta_j(x_i,y_i))_{i=1}^r=\CutVector_H.
\]
Since
$H$
was arbitrary,
every cut vector occurs infinitely often.
We now show that these cut vectors span
$\RealNumbers^r$.
For each
$i\in\{1,\ldots,r\}$,
let
$\StandardBasisVector_i$
be the
$i$-th standard basis vector of
$\RealNumbers^r$.
Then
\begin{equation}\label{eq:cut-span}
\CutVector_{\{x_i\}}+\CutVector_{\{y_i\}}-\CutVector_{\{x_i,y_i\}}=2\StandardBasisVector_i.
\end{equation}
Indeed,
fix
$i\in\{1,\ldots,r\}$.
For each
$h\in\{1,\ldots,r\}$,
the
$h$-th coordinate of the left side is
\[
\begin{aligned}
L_h
&=|\IndicatorFunction_{\{x_i\}}(x_h)-\IndicatorFunction_{\{x_i\}}(y_h)|
+|\IndicatorFunction_{\{y_i\}}(x_h)-\IndicatorFunction_{\{y_i\}}(y_h)|\\
&\quad-|\IndicatorFunction_{\{x_i,y_i\}}(x_h)-\IndicatorFunction_{\{x_i,y_i\}}(y_h)|.
\end{aligned}
\]
Substituting
$h=i$
and using
$x_i\ne y_i$,
we obtain
\[
L_i=|1-0|+|0-1|-|1-1|=2.
\]
Now let
$h\ne i$.
The distinct two-point subsets
$\{x_h,y_h\}$
and
$\{x_i,y_i\}$
have at most one common point.
If they have one common point,
we may interchange the endpoints within either two-point subset and assume that
$x_h=x_i$
and
$y_h\notin\{x_i,y_i\}$.
Substitution then yields
\[
L_h=|1-0|+|0-0|-|1-0|=0.
\]
If they have no common point,
all six indicator values are zero,
so
\[
L_h=|0-0|+|0-0|-|0-0|=0.
\]
Thus
$L_i=2$
and
$L_h=0$
for all
$h\ne i$,
which proves \eqref{eq:cut-span}.
Hence the recurrent vectors span
$\RealNumbers^r$.
Lemma~\ref{lem:digits}, with
$b=2$
and
$\GapExponent_j=(j+2)!$,
proves that
$\AuxiliaryMetric(x_1,y_1),\ldots,\AuxiliaryMetric(x_r,y_r)$
are algebraically independent.

Assume now that
$X$
is completely metrizable,
and let
$\{x_j\}_{j\in\NonnegativeIntegers}$
be a Cauchy sequence in
$(X,\AuxiliaryMetric)$.
For every
$n\in\NonnegativeIntegers$,
there exists
$J\in\NonnegativeIntegers$
such that,
for all
$j,\ell\ge J$,
we have
\[
\AuxiliaryMetric(x_j,x_\ell)<w_{j(n)}.
\]
By \eqref{eq:ultrametric-ball-inclusion},
we obtain
\[
\sigma(x_j,x_\ell)<2^{-n}\qquad(j,\ell\ge J).
\]
Thus the sequence is Cauchy in
$(X,\sigma)$.
Completeness of
$\sigma$
yields a limit
$x\in X$.
Since
$\sigma$
and
$\AuxiliaryMetric$
induce the same topology,
we have
$\AuxiliaryMetric(x_j,x)\to0$.
Hence
$\AuxiliaryMetric$
is complete.
\end{proof}

We next prove Theorem~\ref{thm:density} using the metric constructed in Proposition~\ref{prop:bounded-ai}.

\begin{remark}\label{rem:gluing}
The construction in \cite[Proposition~5.1]{Ishiki2024} cannot be applied directly here,
since its defining formula imposes algebraic relations among the distances.
\end{remark}

The rational approximation below uses the ceiling construction of \cite[Proposition~2.5]{Ishiki2023Ranges} on one representative from each member of a clopen partition.
We then add a rational multiple of the metric from Proposition~\ref{prop:bounded-ai}.

\begin{proof}[Proof of Theorem~\ref{thm:density}]
The case
$|X|\le1$
is immediate.
Fix
$d\in\CompatibleMetrics(X)$
and
$\varepsilon>0$,
and choose
$\eta\in\Rationals$
with
$0<\eta<\varepsilon/4$.
Choose
$\AuxiliaryMetric$
by Proposition~\ref{prop:bounded-ai},
complete if
$X$
is completely metrizable.
By \cite[Proposition~1.2 and Corollary~1.4]{Ellis1970},
choose a clopen partition
$\mathcal C$
refining the open cover
$\{\OpenBall{x}{\eta/3}{d}\}_{x\in X}$.
Then,
for every
$C\in\mathcal C$,
we have
\[
\diam_d C\le\frac{2\eta}{3}<\eta.
\]
For each
$C\in\mathcal C$,
choose
$p_C\in C$.
Define the map
$\CoarsePseudometric\colon X^2\to[0,\infty)$
by setting,
for
$C,D\in\mathcal C$,
$x\in C$,
and
$y\in D$,
\[
\CoarsePseudometric(x,y)=\eta\left\lceil\frac{d(p_C,p_D)}\eta\right\rceil.
\]
This map is locally constant and satisfies
\[
\CoarsePseudometric(X^2)\subseteq\eta\NonnegativeIntegers\subseteq\Rationals_{\ge0}.
\]
The triangle inequality for
$\CoarsePseudometric$
follows from that for
$d$
and the subadditivity of the ceiling function.
Since
$\CoarsePseudometric$
is symmetric and vanishes on the diagonal,
it is a continuous pseudometric.
For every
$C,D\in\mathcal C$,
$x\in C$,
and
$y\in D$,
we obtain
\begin{equation}\label{eq:coarse-error}
\begin{split}
|\CoarsePseudometric(x,y)-d(x,y)|
&\le\left|\eta\left\lceil\frac{d(p_C,p_D)}\eta\right\rceil-d(p_C,p_D)\right|
+d(x,p_C)+d(y,p_D)\\
&<3\eta.
\end{split}
\end{equation}
Define
$e=\CoarsePseudometric+\eta \AuxiliaryMetric$.
This is a continuous metric, and
$e\ge\eta \AuxiliaryMetric$
proves that it is compatible with the topology of
$X$.
By \eqref{eq:coarse-error} and the bound on
$\AuxiliaryMetric$,
we obtain
\[
\begin{aligned}
\UniformDistance_X(d,e)
&=\sup_{x,y\in X}|d(x,y)-\CoarsePseudometric(x,y)-\eta\AuxiliaryMetric(x,y)|\\
&\le\sup_{x,y\in X}|d(x,y)-\CoarsePseudometric(x,y)|
+\eta\sup_{x,y\in X}\AuxiliaryMetric(x,y)\\
&\le3\eta+\frac\eta2<\varepsilon.
\end{aligned}
\]
Fix
$r\in\PositiveIntegers$
and distinct
$\{x_1,y_1\},\ldots,\{x_r,y_r\}\in[X]^2$.
Then
the substitution
\[
T_i\longmapsto \CoarsePseudometric(x_i,y_i)+\eta T_i\qquad(1\le i\le r)
\]
is an automorphism of
$\Rationals[T_1,\ldots,T_r]$.
Since the coordinates
$\AuxiliaryMetric(x_i,y_i)$
are algebraically independent, so are the coordinates
$e(x_i,y_i)$.
Thus
$e\in\AlgebraicallyIndependentMetrics(X)$.
Assume now that
$X$
is completely metrizable.
Then the chosen
$\AuxiliaryMetric$
is complete.
Every Cauchy sequence in
$(X,e)$
is Cauchy in
$(X,\AuxiliaryMetric)$
because
$e\ge\eta \AuxiliaryMetric$.
It therefore converges in
$(X,\AuxiliaryMetric)$.
Since both metrics induce the topology of
$X$,
the sequence also converges in
$(X,e)$,
proving completeness of
$e$.
\end{proof}

\begin{remark}\label{rem:magnitude}
Algebraic independence of the distances also permits reconstruction of each finite subspace from its magnitude function.
For a nonempty finite metric space
$(A,e)$,
define its similarity matrix and magnitude function, for sufficiently large
$t>0$,
by
\[
Z_A(t)=(\exp(-t e(x,y)))_{x,y\in A},
\qquad
M_A(t)=\sum_{x,y\in A}(Z_A(t)^{-1})_{xy}.
\]
The matrix tends to the identity as
$t\to\infty$,
so it is invertible for all sufficiently large
$t$.
O'Hara proves that rational independence of the edge lengths allows recovery of the finite metric space from this function \cite[Theorem~2.4(2)]{OHara2025}.
For
$e\in\AlgebraicallyIndependentMetrics(X)$,
every nonempty finite subspace
$A\subseteq X$
satisfies this hypothesis, since algebraic independence implies rational linear independence.
Thus its magnitude function determines its isometry type.
\end{remark}
\section{Algebraic independence as a \texorpdfstring{$G_\delta$}{G-delta} property}\label{sec:borel}
Theorem~\ref{thm:density} constructs algebraically independent metrics.
We now describe the set of all such metrics when
$X$
is
$\sigma$-compact.
To obtain closed sets of metrics satisfying a polynomial relation,
we restrict its endpoint tuples to compact sets and keep distinct two-point subsets from coalescing.
For equality of two distances,
related compactness arguments appear in \cite[Theorem~2]{Rouyer2011} and \cite[Proposition~5.3]{Ishiki2024}.

\begin{proof}[Proof of Theorem~\ref{thm:ai-gdelta}]
Fix a compatible metric
$\rho$
and an increasing sequence of compact sets
$\{K_n\}_{n\in\NonnegativeIntegers}$
with
$X=\bigcup_{n\in\NonnegativeIntegers}K_n$.
For points
$x,y,u,v\in X$,
define
\[
\Delta_\rho((x,y),(u,v))=\min\{\rho(x,u)+\rho(y,v),\rho(x,v)+\rho(y,u)\}.
\]
Fix
$r\in\PositiveIntegers$
and
$n,m\in\NonnegativeIntegers$,
and let
$\mathcal C_{r,n,m}$
be the set of tuples
$((x_i,y_i))_{i=1}^r\in K_n^{2r}$
satisfying
\[
\rho(x_i,y_i)\ge2^{-m}\quad(1\le i\le r),
\]
\[
\Delta_\rho((x_i,y_i),(x_h,y_h))\ge2^{-m}\quad(1\le i<h\le r).
\]
This is a compact set.
Every tuple representing distinct two-point subsets belongs to some
$\mathcal C_{r,n,m}$.

For a nonzero
$P\in\Rationals[T_1,\ldots,T_r]$,
let
$\PolynomialZeroMetrics_{r,P,n,m}$
consist of the metrics
$d\in\CompatibleMetrics(X)$
for which some tuple in
$\mathcal C_{r,n,m}$
satisfies
\[
P(d(x_1,y_1),\ldots,d(x_r,y_r))=0.
\]
We prove that
$\PolynomialZeroMetrics_{r,P,n,m}$
is closed.
Let
$d_\ell\in \PolynomialZeroMetrics_{r,P,n,m}$
and
$d\in\CompatibleMetrics(X)$
satisfy
\[
\UniformDistance_X(d_\ell,d)\longrightarrow0\quad(\ell\to\infty).
\]
For each
$\ell\in\NonnegativeIntegers$,
choose a witnessing tuple
$((x_{i,\ell},y_{i,\ell}))_{i=1}^r\in\mathcal C_{r,n,m}$.
By compactness,
pass to a subsequence converging to a tuple
$((x_i,y_i))_{i=1}^r\in\mathcal C_{r,n,m}$.
For each
$i\in\{1,\ldots,r\}$,
continuity of
$d$
implies
\[
|d_\ell(x_{i,\ell},y_{i,\ell})-d(x_i,y_i)|
\le \UniformDistance_X(d_\ell,d)+|d(x_{i,\ell},y_{i,\ell})-d(x_i,y_i)|\longrightarrow0.
\]
Continuity of
$P$
implies
\[
P(d(x_1,y_1),\ldots,d(x_r,y_r))
=\lim_{\ell\to\infty}P(d_\ell(x_{1,\ell},y_{1,\ell}),\ldots,d_\ell(x_{r,\ell},y_{r,\ell}))=0.
\]
Hence
$d\in\PolynomialZeroMetrics_{r,P,n,m}$.
Thus
\[
\AlgebraicallyIndependentMetrics(X)=\bigcap_{r\ge1}\ \bigcap_{0\ne P\in\Rationals[T_1,\ldots,T_r]}\ \bigcap_{n,m\in\NonnegativeIntegers}
\bigl(\CompatibleMetrics(X)\setminus \PolynomialZeroMetrics_{r,P,n,m}\bigr)
\]
is a countable intersection of open sets.
If
$X$
is also strongly zero-dimensional, then
$|X|\le\mathfrak c$
and Theorem~\ref{thm:density} proves density.
\end{proof}
\section{Metrics with trivial isometry group}\label{sec:rigidity}
We now study rigidity without a dimension assumption.
On a locally compact Polish space,
we prove that the nonrigid proper metrics form an
$F_\sigma$
subset of the space of proper compatible metrics.
The proof takes limits of isometries and their inverses,
using properness to place the images of each point in a compact set.

\begin{remark}\label{rem:zero-dimensional-rigidity}
For a strongly zero-dimensional metrizable space
$X$
with
$3\le|X|\le\mathfrak c$,
Theorem~\ref{thm:density} produces rigid metrics.
Indeed,
strong rigidity forces an isometry to preserve each two-point subset,
and the intersection of two such subsets determines their common point.
\end{remark}

We denote by
$\ProperMetrics(X)$
the proper compatible metrics and by
$\TotallyBoundedMetrics(X)$
the totally bounded compatible metrics.
Both sets carry the subspace topology of
$(\CompatibleMetrics(X),\UniformDistance_X)$.

\begin{proposition}\label{prop:proper-gdelta}
Let
$X$
be a locally compact Polish space.
Then
$\RigidMetrics(X)\cap\ProperMetrics(X)$
is a
$G_\delta$
subset of
$\ProperMetrics(X)$.
\end{proposition}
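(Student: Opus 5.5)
We will show that the complement $\ProperMetrics(X)\setminus\RigidMetrics(X)$ is a countable union of relatively closed subsets of $\ProperMetrics(X)$. Fix a compatible metric $\rho$ on $X$, a countable dense set $\{a_k\}_{k\in\NonnegativeIntegers}$, and a base point $a_0$. A nonidentity bijective isometry $f$ of $(X,d)$ moves some point, and hence, by continuity and density, moves some $a_k$ by a positive $\rho$-distance. For $k,m\in\NonnegativeIntegers$, let $\IsometryWitnessMetrics_{k,m}$ be the set of $d\in\ProperMetrics(X)$ admitting a bijective isometry $f$ of $(X,d)$ with $\rho(f(a_k),a_k)\ge2^{-m}$. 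To keep the images of $a_k$ controlled we also bound $d(f(a_0),a_0)\le m$. Every nonrigid proper metric then lies in some $\IsometryWitnessMetrics_{k,m}$, and the union of these sets is exactly the nonrigid part.

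The main step is to prove that each $\IsometryWitnessMetrics_{k,m}$ is closed in $\ProperMetrics(X)$. Take $d_\ell\in\IsometryWitnessMetrics_{k,m}$ with witnesses $f_\ell$, and suppose $\UniformDistance_X(d_\ell,d)\to0$ with $d$ proper; put $\epsilon_\ell=\UniformDistance_X(d_\ell,d)$. For each $x\in X$ we have
\[
d(f_\ell(x),a_0)\le d_\ell(f_\ell(x),a_0)+\epsilon_\ell\le d_\ell(x,a_0)+m+\epsilon_\ell\le d(x,a_0)+m+2\epsilon_\ell+\ldots,
\]
so $f_\ell(x)$ stays in a closed $d$-ball, which is compact by properness. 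The same bound holds for $f_\ell^{-1}(x)$. By a diagonal argument we pass to a subsequence along which $f_\ell(a_j)$ and $f_\ell^{-1}(a_j)$ converge for every $j$. The maps $f_\ell$ satisfy
\[
|d(f_\ell(x),f_\ell(y))-d(x,y)|\le2\epsilon_\ell,
\]
so they are approximate $d$-isometries. The limit $f$ on $\{a_j\}$ is therefore a $d$-isometry and extends uniquely to a $d$-isometry $f\colon X\to X$. Here completeness is needed to extend, and $d$ is complete because it is proper.

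It remains to upgrade this pointwise limit. Approximate isometry makes $f_\ell(x)\to f(x)$ for every $x$: given $a_j$ near $x$, the point $f_\ell(x)$ is within about $d(x,a_j)+2\epsilon_\ell$ of $f_\ell(a_j)$. Similarly $f_\ell^{-1}\to g$ with $g$ a $d$-isometry. Passing to the limit in $f_\ell(f_\ell^{-1}(x))=x$, using the approximate-isometry estimate and $f_\ell^{-1}(x)\to g(x)$, gives $f\circ g=\IdentityMap_X$, and likewise $g\circ f=\IdentityMap_X$. Thus $f$ is a bijective isometry. Moreover $\rho(f(a_k),a_k)=\lim\rho(f_\ell(a_k),a_k)\ge2^{-m}$ and $d(f(a_0),a_0)\le m$, so $d\in\IsometryWitnessMetrics_{k,m}$.

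The main obstacle is the compactness step. We need the images and preimages of every point to remain in a fixed compact set uniformly in $\ell$, and this uses both properness of the limit metric $d$ and the normalization $d(f(a_0),a_0)\le m$ built into $\IsometryWitnessMetrics_{k,m}$; this is why the statement is restricted to $\ProperMetrics(X)$. The passage from pointwise convergence on a dense set to a genuine bijective limit also requires care, and we handle it by tracking the inverses $f_\ell^{-1}$ simultaneously. Local compactness of $X$ is used only to guarantee that $\ProperMetrics(X)$ is the natural ambient space.
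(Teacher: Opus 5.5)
Your proposal is correct and follows essentially the same route as the paper: you write the nonrigid proper metrics as a countable union of witness sets and show each is relatively closed by confining $f_\ell(x)$ and $f_\ell^{-1}(x)$ to compact balls of the proper limit metric. You then extract a diagonal limit on a dense sequence, extend by completeness, and check that the limits of $f_\ell$ and $f_\ell^{-1}$ are mutually inverse. The differences are only in bookkeeping. You measure displacement in a fixed compatible metric $\rho$ and normalize by $d(f(a_0),a_0)\le m$, which also controls $f^{-1}(a_0)$ since $f$ is a $d$-isometry. The paper instead measures displacement in $d$ and requires $f(o),f^{-1}(o)\in K_n$ for a fixed compact exhaustion.
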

\begin{proof}
The empty space is immediate.
Fix a point
$o\in X$,
a dense sequence
$\{a_j\}_{j\in\NonnegativeIntegers}$,
and an increasing sequence of compact sets
$\{K_n\}_{n\in\NonnegativeIntegers}$
covering
$X$.
For
$j,m,n\in\NonnegativeIntegers$,
let
$\IsometryWitnessMetrics_{j,m,n}$
consist of the metrics
$d\in\ProperMetrics(X)$
admitting
$f\in\IsometryGroup(X,d)$
such that
\begin{enumerate}[label=\textup{(R\arabic*)},ref=\textup{(R\arabic*)}]
\item\label{cond:rigid-basepoint}
$f(o),f^{-1}(o)\in K_n$.
\item\label{cond:rigid-displacement}
$d(a_j,f(a_j))\ge2^{-m}$.
\end{enumerate}
Every continuous nonidentity self-map moves a member of the dense sequence.
Consequently,
\begin{equation}\label{eq:rigid-complement}
\ProperMetrics(X)\setminus\RigidMetrics(X)=\bigcup_{j,m,n\in\NonnegativeIntegers}\IsometryWitnessMetrics_{j,m,n}.
\end{equation}

Fix
$j,m,n\in\NonnegativeIntegers$.
We prove that
$\IsometryWitnessMetrics_{j,m,n}$
is relatively closed.
Let
$d_k\in \IsometryWitnessMetrics_{j,m,n}$
converge uniformly to
$d\in\ProperMetrics(X)$,
choose witnessing isometries
$f_k$,
and put
$g_k=f_k^{-1}$
and
$\eta_k=\UniformDistance_X(d_k,d)$.
After discarding finitely many terms and relabeling,
assume that
$\eta_k\le1$
for every
$k\in\NonnegativeIntegers$.
We first bound the images of each point under the maps and their inverses.
For every
$k\in\NonnegativeIntegers$,
every
$h\in\{f_k,g_k\}$,
and every
$x,y\in X$,
we have
\begin{equation}\label{eq:approx-isometry}
\begin{aligned}
&|d(h(x),h(y))-d(x,y)|\\
&\quad\le |d(h(x),h(y))-d_k(h(x),h(y))|
+|d_k(h(x),h(y))-d(x,y)|\\
&\quad=|d(h(x),h(y))-d_k(h(x),h(y))|
+|d_k(x,y)-d(x,y)|\\
&\quad\le2\eta_k.
\end{aligned}
\end{equation}
For every
$k\in\NonnegativeIntegers$,
every
$h\in\{f_k,g_k\}$,
and every
$x\in X$,
condition~\ref{cond:rigid-basepoint} and \eqref{eq:approx-isometry} imply
\begin{equation}\label{eq:proper-bound}
\begin{aligned}
d(o,h(x))
&\le d(o,h(o))+d(h(o),h(x))\\
&\le\sup_{z\in K_n}d(o,z)+d(o,x)+2\eta_k.
\end{aligned}
\end{equation}
For each fixed
$x\in X$,
put
$R_x=\sup_{z\in K_n}d(o,z)+d(o,x)+2$.
By \eqref{eq:proper-bound},
for every
$k\in\NonnegativeIntegers$
we have
\[
f_k(x),g_k(x)\in\ClosedBall{o}{R_x}{d}.
\]
This set is compact because
$d$
is proper.

We next construct the limiting isometric embeddings.
Put
$A=\{o\}\cup\{a_j\mid j\in\NonnegativeIntegers\}$.
Since
$A$
is countable,
a diagonal argument applied to the pairs
$(f_k(a),g_k(a))$
yields a single subsequence along which both
$f_k(a)$
and
$g_k(a)$
converge for every
$a\in A$.
Define the maps
$f_A,g_A\colon A\to X$
by the following formulas for
$a\in A$.
\[
f_A(a)=\lim_{k\to\infty}f_k(a),
\qquad g_A(a)=\lim_{k\to\infty}g_k(a).
\]
Equation~\eqref{eq:approx-isometry} shows that
$f_A,g_A\colon A\to X$
preserve
$d$.
Since
$A$
is dense and
$d$
is complete,
they extend uniquely to isometric embeddings
$f,g\colon X\to X$.
We prove pointwise convergence on all of
$X$.
For every
$x\in X$,
$a\in A$,
and
$k\in\NonnegativeIntegers$,
the triangle inequality,
\eqref{eq:approx-isometry},
and the fact that
$f$
preserves
$d$
imply
\begin{equation}\label{eq:pointwise-estimates}
\begin{aligned}
d(f_k(x),f(x))
&\le d(f_k(x),f_k(a))+d(f_k(a),f(a))+d(f(a),f(x))\\
&\le d(x,a)+2\eta_k+d(f_k(a),f(a))+d(a,x)\\
&=2d(x,a)+2\eta_k+d(f_k(a),f(a)).
\end{aligned}
\end{equation}
Similarly,
for every
$x\in X$,
$a\in A$,
and
$k\in\NonnegativeIntegers$,
we obtain
\begin{equation}\label{eq:inverse-pointwise-estimate}
d(g_k(x),g(x))\le2d(x,a)+2\eta_k+d(g_k(a),g(a)).
\end{equation}
Fix
$x\in X$.
Since both sequences converge on
$A$,
for every
$a\in A$
we obtain
\[
\limsup_{k\to\infty}\max\{d(f_k(x),f(x)),d(g_k(x),g(x))\}\le2d(x,a).
\]
The density of
$A$
therefore implies
\[
f_k(x)\longrightarrow f(x),\qquad g_k(x)\longrightarrow g(x).
\]
It remains to prove that
$f$
and
$g$
are inverse maps.
For each
$x\in X$,
applying \eqref{eq:approx-isometry} to
$g_k$
yields
\[
d(g_k(f_k(x)),g_k(f(x)))\le d(f_k(x),f(x))+2\eta_k\longrightarrow0.
\]
Since
$g_k(f(x))\to g(f(x))$
and
$g_k(f_k(x))=x$,
we obtain
$gf=\IdentityMap_X$.
Interchanging
$f_k$
and
$g_k$
proves
$fg=\IdentityMap_X$.
Thus
$f\in\IsometryGroup(X,d)$.
Condition~\ref{cond:rigid-basepoint} passes to the limit since
$f_k(o)\to f(o)$,
$g_k(o)\to g(o)=f^{-1}(o)$,
and
$K_n$
is closed.
For condition~\ref{cond:rigid-displacement},
uniform convergence and the reverse triangle inequality yield
\[
\begin{aligned}
&|d_k(a_j,f_k(a_j))-d(a_j,f(a_j))|\\
&\quad\le |d_k(a_j,f_k(a_j))-d(a_j,f_k(a_j))|
+|d(a_j,f_k(a_j))-d(a_j,f(a_j))|\\
&\quad\le\eta_k+d(f_k(a_j),f(a_j))\longrightarrow0.
\end{aligned}
\]
Since
$d_k(a_j,f_k(a_j))\ge2^{-m}$
for every
$k$,
we obtain
$d(a_j,f(a_j))\ge2^{-m}$.
Thus condition~\ref{cond:rigid-displacement} also holds.
Hence
$d\in \IsometryWitnessMetrics_{j,m,n}$.
Equation~\eqref{eq:rigid-complement} proves the proposition.
\end{proof}
This argument uses the compactness mechanism of \cite[Proposition~2.11]{Niemiec2014}, with additive uniform convergence in place of a common multiplicative bound.

For density, we use the following precise consequence of \cite[Theorem~1.7]{Niemiec2014}.
If
$X$
is locally compact Polish with
$|X|>2$,
then for every
$d\in\ProperMetrics(X)$
and every
$t>0$
there exists
$e\in\RigidMetrics(X)\cap\ProperMetrics(X)$
such that
$d\le e\le(1+t)d$.
Apply that theorem with
$G=\{\IdentityMap_X\}$.
The group is closed in the compact-open topology,
and each family of maps in its condition \textup{(Iso2)} is empty or a singleton,
so the required continuity and compactness conditions hold.
Condition \textup{(Iso3)} asks whether every map that preserves each two-point subset belongs to
$G$.
This holds because
$|X|>2$.

\begin{theorem}\label{thm:compact-rigid}
Let
$X$
be a compact metrizable space with
$|X|>2$.
Then
$\RigidMetrics(X)$
is a dense
$G_\delta$
subset of
$\CompatibleMetrics(X)$.
\end{theorem}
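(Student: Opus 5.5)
The plan is to reduce both halves of the statement to the locally compact results just established, using the fact that on a compact metrizable space every compatible metric is proper. First I will check that $\ProperMetrics(X)=\CompatibleMetrics(X)$ when $X$ is compact. Every closed ball is closed in the compact space $X$, hence compact. A compact metrizable space is also separable and completely metrizable, hence locally compact Polish. Consequently Proposition~\ref{prop:proper-gdelta} applies directly, and it says that $\RigidMetrics(X)=\RigidMetrics(X)\cap\ProperMetrics(X)$ is a $G_\delta$ subset of $\ProperMetrics(X)=\CompatibleMetrics(X)$. The topologies agree, since both carry the uniform topology induced by $\min\{1,\UniformDistance_X\}$.

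For density, fix $d\in\CompatibleMetrics(X)$ and $\varepsilon>0$. Since $X$ is compact, $d$ is bounded; put $D=\sup_{x,y}d(x,y)<\infty$, and choose $t>0$ with $tD<\varepsilon$. I will then apply the consequence of Niemiec's theorem stated just before the theorem, with $X$ locally compact Polish and $|X|>2$. It yields $e\in\RigidMetrics(X)\cap\ProperMetrics(X)$ with $d\le e\le(1+t)d$. Hence $0\le e(x,y)-d(x,y)\le t\,d(x,y)\le tD$ for all $x,y\in X$, so $\UniformDistance_X(d,e)\le tD<\varepsilon$. This gives density.

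The only point requiring care is to confirm that the multiplicative approximation becomes an additive uniform one. This uses boundedness of $d$, which holds by compactness, and it is exactly why the statement is restricted to compact spaces. I would also double-check that the hypotheses of Niemiec's theorem, as summarized before the statement, need nothing beyond $|X|>2$. Apart from that, both steps are direct citations, and I expect no genuine obstacle.
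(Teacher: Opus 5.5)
Your proposal is correct and follows the paper's proof essentially verbatim: the $G_\delta$ assertion comes from Proposition~\ref{prop:proper-gdelta}, because every compatible metric on a compact space is proper. Density comes from Niemiec's bound $d\le e\le(1+t)d$ combined with a choice of $t$ satisfying $t\diam_d(X)<\varepsilon$.
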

\begin{proof}
Every compatible metric on
$X$
is proper, so Proposition~\ref{prop:proper-gdelta} proves the
$G_\delta$
assertion.
Fix
$d\in\CompatibleMetrics(X)$
and
$\varepsilon>0$.
Choose
$t>0$
with
$t\diam_d(X)<\varepsilon$.
By \cite[Theorem~1.7]{Niemiec2014},
there exists
$e\in\RigidMetrics(X)$
with
$d\le e\le(1+t)d$.
Thus
\[
\UniformDistance_X(d,e)\le t\diam_d(X)<\varepsilon.
\]
This completes the proof.
\end{proof}

Compact completion extends this approximation result to all totally bounded target metrics.
\begin{proposition}\label{prop:tb-rigid}
Let
$X$
be a metrizable space with
$|X|>2$.
Then, for every
$d\in\TotallyBoundedMetrics(X)$
and every
$\varepsilon>0$,
there exists
$e\in\TotallyBoundedMetrics(X)$
with
$\UniformDistance_X(d,e)<\varepsilon$
such that the compact completion of
$(X,e)$
has trivial isometry group.
In particular,
$\RigidMetrics(X)\cap\TotallyBoundedMetrics(X)$
is dense in
$\TotallyBoundedMetrics(X)$.
\end{proposition}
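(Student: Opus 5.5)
Let $(Y,\bar d)$ be the compact completion of $(X,d)$; it is compact metrizable and contains $X$ as a dense subspace, with $|Y|\ge|X|>2$. The plan is to approximate $\bar d$ on $Y$ by a rigid compatible metric using Theorem~\ref{thm:compact-rigid}, restrict it to $X$, and check that the restriction is compatible, totally bounded, close to $d$, and rigid.

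By Theorem~\ref{thm:compact-rigid}, choose $\bar e\in\RigidMetrics(Y)$ with $\UniformDistance_Y(\bar d,\bar e)<\varepsilon$. Put $e=\bar e|_{X\times X}$. Since $\bar e$ induces the topology of $Y$, and $X$ carries the subspace topology, $e\in\CompatibleMetrics(X)$. It is totally bounded because it is a restriction of a metric on a compact space. The estimate $\UniformDistance_X(d,e)\le\UniformDistance_Y(\bar d,\bar e)<\varepsilon$ is immediate. Since $X$ is dense in the compact space $(Y,\bar e)$, the completion of $(X,e)$ is canonically $(Y,\bar e)$, which has trivial isometry group by choice.

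For the final assertion, let $f\in\IsometryGroup(X,e)$. Both $f$ and $f^{-1}$ are uniformly continuous, so they extend to isometries $\bar f,\overline{f^{-1}}\colon Y\to Y$. By density, these extensions are mutually inverse, so $\bar f\in\IsometryGroup(Y,\bar e)$. Hence $\bar f=\IdentityMap_Y$, and therefore $f=\IdentityMap_X$, giving $e\in\RigidMetrics(X)\cap\TotallyBoundedMetrics(X)$.

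The main obstacle is the identification of the completion. One must be careful that the completion of $(X,e)$ really is $Y$ with $\bar e$, and not merely some space homeomorphic to it. This follows because $(Y,\bar e)$ is complete, being compact, and contains $X$ densely, so it is the completion by its universal property. The remaining steps are routine.
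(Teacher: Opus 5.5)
Your proposal is correct and follows essentially the same route as the paper. You pass to the compact completion, apply Theorem~\ref{thm:compact-rigid} there (noting $|Y|\ge|X|>2$), restrict the rigid metric to $X$, and identify the completion of $(X,e)$ with the compact space carrying the rigid metric. You then extend each isometry of $(X,e)$ together with its inverse to conclude that it is the identity.
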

\begin{proof}
Let
$(K,\widehat d)$
be the compact completion of
$(X,d)$,
identifying
$X$
with its dense image.
By Theorem~\ref{thm:compact-rigid}, choose a compatible rigid metric
$\rho$
on
$K$
with
$\UniformDistance_K(\widehat d,\rho)<\varepsilon$.
Put
$e=\rho|_{X^2}$.
Then
$e$
is compatible, totally bounded, and satisfies
$\UniformDistance_X(d,e)<\varepsilon$.
Since
$\rho$
induces the topology of
$K$,
the set
$X$
is dense in
$(K,\rho)$,
which is the completion of
$(X,e)$.
Every bijective isometry of
$(X,e)$
extends to a bijective isometry of
$(K,\rho)$
by extending both the map and its inverse.
Rigidity of
$(K,\rho)$
implies rigidity of
$(X,e)$.
\end{proof}

\begin{remark}\label{rem:smooth-rigidity}
For comparison,
in the smooth setting, Mounoud proved that metrics with trivial isometry group contain an open dense subset of the space of smooth pseudo-Riemannian metrics of any fixed admissible signature on a compact manifold of dimension at least two \cite[Theorem~1]{Mounoud2015}.
The topology there is the Whitney
$C^\infty$
topology.
\end{remark}
\section{Questions}\label{sec:questions}
The compact and totally bounded approximation results leave open the case of a general target metric.
The
$G_\delta$
argument for proper metrics also does not describe the full set of rigid metrics.
We therefore ask the following questions in the uniform topology.

\begin{question}\label{q:density}
Let
$X$
be a metrizable space with
$|X|\ge3$.
Is
$\RigidMetrics(X)$
dense in
$(\CompatibleMetrics(X),\UniformDistance_X)$?
\end{question}
\begin{question}\label{q:borel}
Let
$X$
be a metrizable space.
Is
$\RigidMetrics(X)$
a Borel subset of
$(\CompatibleMetrics(X),\UniformDistance_X)$?
\end{question}
For
$|X|\le1$,
every metric is rigid.
For
$|X|=2$,
the transposition is always an isometry, so
$\RigidMetrics(X)=\emptyset$.

Theorem~\ref{thm:density} answers Question~\ref{q:density} when
$X$
is strongly zero-dimensional and
$3\le|X|\le\mathfrak c$.
Under the additional assumption of
$\sigma$-compactness, Theorem~\ref{thm:ai-gdelta} exhibits a dense
$G_\delta$
subset of
$\RigidMetrics(X)$.
Containment of such a subset does not imply that
$\RigidMetrics(X)$
itself is Borel.

For locally compact Polish spaces
$X$
with
$|X|>2$,
\cite[Corollary~1.9]{Niemiec2014} asserts density among proper metrics for uniform convergence on compact subsets of
$X^2$.
On an unbounded space, the estimate
$d\le e\le(1+t)d$
does not by itself bound
$\UniformDistance_X(d,e)$.
Moreover, proper metrics need not be dense in
$\CompatibleMetrics(X)$.
For example,
if
$X$
is an infinite discrete space,
then the discrete metric
$d_0$
has a neighborhood in
$\CompatibleMetrics(X)$
containing no proper metric.

The proof of Proposition~\ref{prop:proper-gdelta} depends on the compactness of the sets containing
$f_k(x)$
and
$f_k^{-1}(x)$
in \eqref{eq:proper-bound}.
Completeness without properness does not provide this compactness.
\printbibliography
\end{document}